\pgfplotsset{compat=newest}
\pgfplotsset{plot coordinates/math parser=false}
\newlength\figureheight
\newlength\figurewidth
\DeclareFontFamily{OMX}{MnSymbolE}{}
\DeclareSymbolFont{MnLargeSymbols}{OMX}{MnSymbolE}{m}{n}
\DeclareFontShape{OMX}{MnSymbolE}{m}{n}{
    <-6>  MnSymbolE5
   <6-7>  MnSymbolE6
   <7-8>  MnSymbolE7
   <8-9>  MnSymbolE8
   <9-10> MnSymbolE9
  <10-12> MnSymbolE10
  <12->   MnSymbolE12
}{}
\DeclareFontShape{OMX}{MnSymbolE}{b}{n}{
    <-6>  MnSymbolE-Bold5
   <6-7>  MnSymbolE-Bold6
   <7-8>  MnSymbolE-Bold7
   <8-9>  MnSymbolE-Bold8
   <9-10> MnSymbolE-Bold9
  <10-12> MnSymbolE-Bold10
  <12->   MnSymbolE-Bold12
}{}
\let\llangle\@undefined
\let\rrangle\@undefined
\DeclareMathDelimiter{\llangle}{\mathopen}%
                     {MnLargeSymbols}{'164}{MnLargeSymbols}{'164}
\DeclareMathDelimiter{\rrangle}{\mathclose}%
                     {MnLargeSymbols}{'171}{MnLargeSymbols}{'171}
\let\ForEach\ForAll
\algrenewcommand\alglinenumber[1]{\footnotesize #1:}
 \renewcommand{\ALG@name}{\small Algorithm} 
\theoremstyle{definition}
\newtheorem{theorem}{Theorem}
\newtheorem{lemma}{Lemma}
\newtheorem{corollary}{Corollary}
\newtheorem{definition}{Definition}
\newtheorem{example}{Example}
\newtheorem{remark}{Remark}
\newcommand{\figref}[1]{Figure~\ref{#1}}
\newcommand{\appref}[1]{Appendix~\ref{#1}}
\newcommand{\thmref}[1]{Theorem~\ref{#1}}
\def\RR{\mathbb{R}}
\def\cK{\mathcal{K}}
\DeclareSymbolFont{bbold}{U}{bbold}{m}{n}
\DeclareSymbolFontAlphabet{\mathbbold}{bbold}
\newcommand\Inv{{\textrm{Inv}}}
\newcommand\Mobius{\text{M\"obius}}
\renewenvironment{quote}
  {\list{}{\rightmargin=0.5cm \leftmargin=0.5cm}%
   \item\relax}
  {\endlist}
\numberwithin{equation}{section}
\begin{document}

\title{Wave Simulations in Infinite Spacetime}


\author{Chad McKell}
\curraddr{}
\thanks{}

\author{Mohammad Sina Nabizadeh}
\curraddr{}
\thanks{}

\author{Stephanie Wang}
\curraddr{}
\thanks{}

\author{Albert Chern}
\curraddr{}
\thanks{}


\date{}

\dedicatory{}

\begin{abstract}
Solving the wave equation on an infinite domain has been an ongoing challenge in scientific computing. Conventional approaches to this problem only generate numerical solutions on a small subset of the infinite domain. In this paper, we present a method for solving the wave equation on the entire infinite domain using only finite computation time and memory. Our method is based on the conformal invariance of the scalar wave equation under the Kelvin transformation in Minkowski spacetime. As a result of the conformal invariance, any wave problem with compact initial data contained in a causality cone is equivalent to a wave problem on a bounded set in Minkowski spacetime. We use this fact to perform wave simulations in infinite spacetime using a finite discretization of the bounded spacetime with no additional loss of accuracy introduced by the Kelvin transformation.
\end{abstract}

\maketitle

\begin{multicols}{2}

Numerically solving the wave equation is a fundamental component in many areas of computational science, including acoustics, 
optics, and seismology~\cite{mehra2014acoustic, huang1991scalar, boore1972finite}.
Although the wave equation and its numerical schemes are long studied classical subjects, there remain computational challenges.  
An outstanding example is the \emph{infinite domain problem}---to simulate wave propagation on an infinite domain---frequently encountered by applications involving an open spacetime.
On an unbounded domain, it is intractable, for example, to use any spacetime discretization with a consistent resolution throughout the entire spacetime.
Thus, in practice, these domains have to be truncated to bounded ones when spacetime is discretized.  This compromise has driven a major line of research into developing artificial non-reflecting boundary conditions (NRBCs) accompanying domain truncation for the last 50 years~\cite{orlanski1976simple, engquist1977absorbing, fix1978variational, berenger1994perfectly, chern2019reflectionless}. 

In this paper, we call attention to a non-trivial symmetry within the wave equation:
The scalar wave equation \(\frac{\partial^2 u}{\partial t^2} = \Delta u \) is \emph{conformally invariant} on the Minkowski spacetime when the scalar field \(u\) is treated as a suitable \emph{fractional density}.
It is analogous to the \emph{Kelvin transformation} invariance of the Laplace equation in Euclidean space~\cite{nabizadeh2021kelvin}, but now with a Minkowski metric signature. 
Taking advantage of this conformal geometric property of the wave equation gives elegant solutions to some challenges that do not persist under conformal transformations.

For example, the boundedness of a domain in spacetime is not conformally invariant---a conformal spacetime transformation can turn unbounded subsets to bounded ones.
Therefore, for the infinite domain problem, we can first map the domain to a bounded one conformally, discretize this bounded domain, solve the wave equation by standard numerical schemes, and finally map the solution back to the original infinite domain (\figref{fig:hero_image}). 
No domain truncation is ever needed throughout the process.
We show that:
\begin{quote}
    \emph{Every wave propagation problem on any spacetime domain contained in a causality cone, despite possibly being infinite both spatially and temporally, is equivalent to a wave propagation problem on a compact domain in spacetime (\figref{fig:IllustrationMainStatement}). }
\end{quote}

This proposition implies that simulating wave propagation in the entire infinite spacetime domain only requires simulating wave computations on a bounded domain.  In particular, the computational cost and accuracy are only related to a bounded domain problem. 

\begin{figure}[H]
    \centering
    \includegraphics[width=0.95\linewidth]{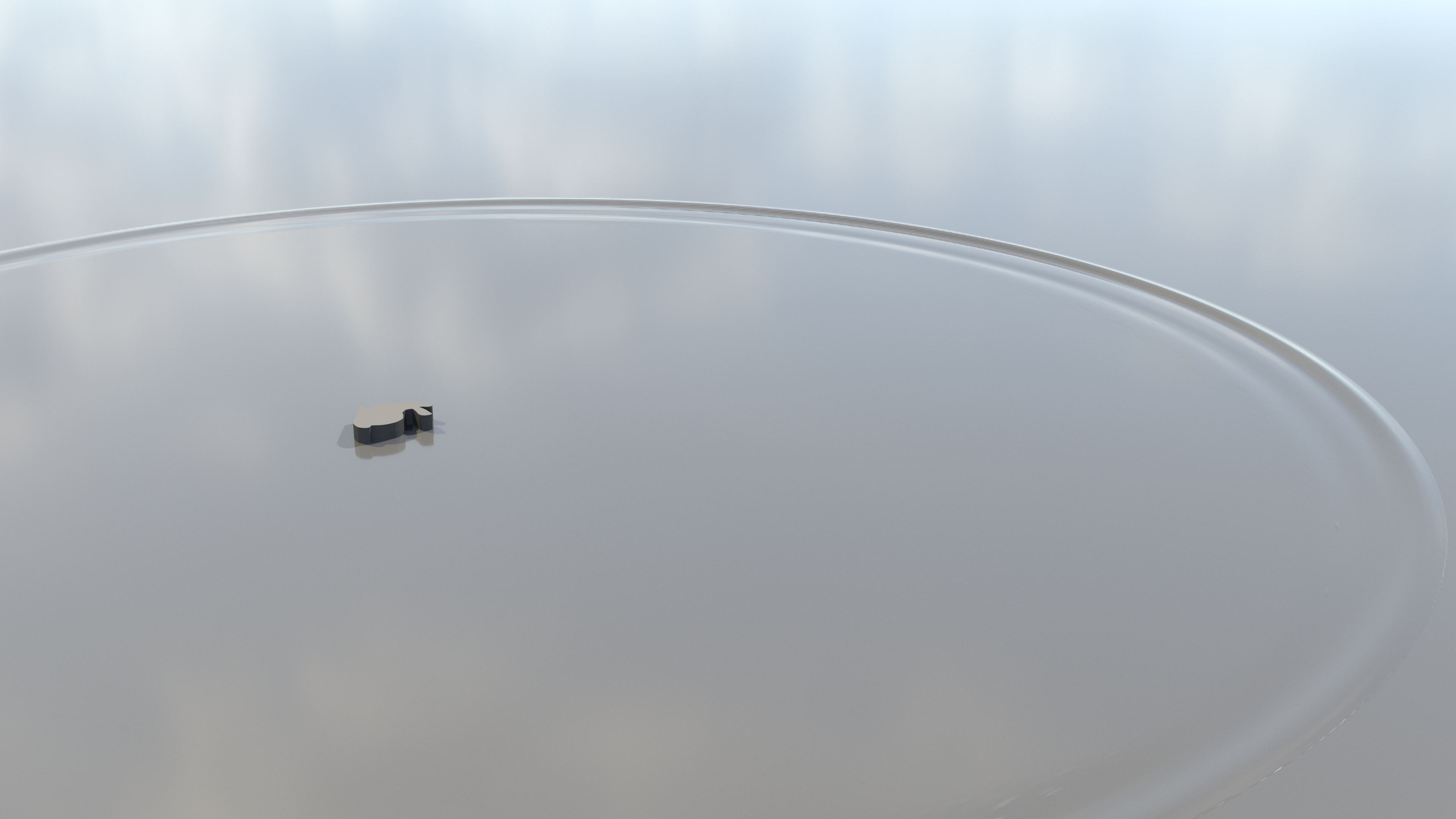}
    \captionof{figure}{Our solution to the infinite domain problem at an instance of time. Our method simulates wave propagation over the infinite expanse of a Minkowski spacetime.}
    \label{fig:hero_image}
\end{figure}

\section{Related Work}

The first documented instance of conformal invariance for wave-like equations on Minkowski spacetime are the Maxwell equations, as discovered by Cunningham and Batemann \cite{Cunningham:1910:PRE,Bateman:1910:TEE}.  
The corresponding spacetime transformations are the \emph{spherical wave transformations} studied in the Lie sphere geometry \cite{cecil2008lie}.  These results have been expanded to other wave equations \cite{Dirac:1936:WEC,Mclennan:1956:CIC,Lomont:1961:CIM}, including the scalar wave equation.  However, the conformal invariance of the wave equation has not been applied in the development of numerical methods for waves computations.

Over the past half-century, several numerical methods have been developed to handle infinite domains in wave simulations. To highlight the distinct advantages of our approach over earlier techniques, we briefly discuss two conventional approaches to the infinite domain problem. For a more comprehensive review of these conventional approaches, see~\cite{tsynkov1998numerical, seibel2022boundary}.

\begin{figure}[H]
    \centering
    \begin{picture}(240,120)
    \put(0,0){\includegraphics[width=0.48\columnwidth, trim = 300px 0px 300px 0px, clip]{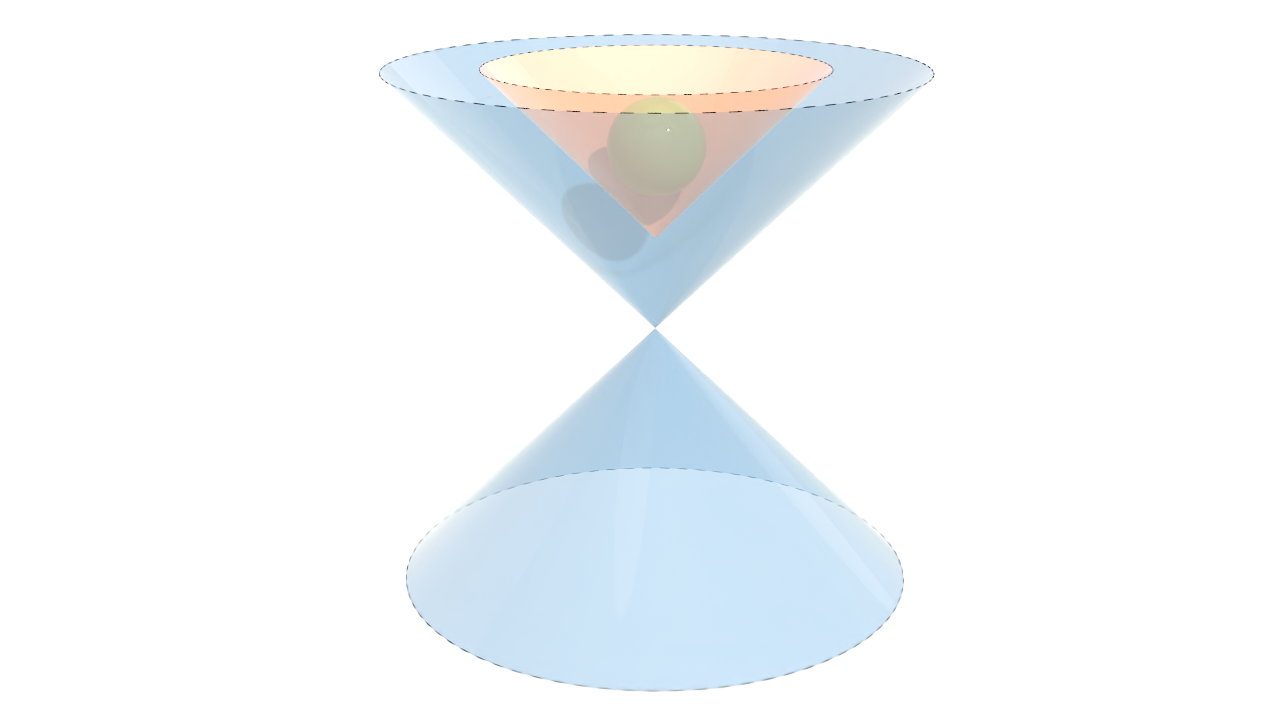}}
    \put(120,0){\includegraphics[width=0.48\columnwidth, trim = 300px 0px 300px 0px, clip]{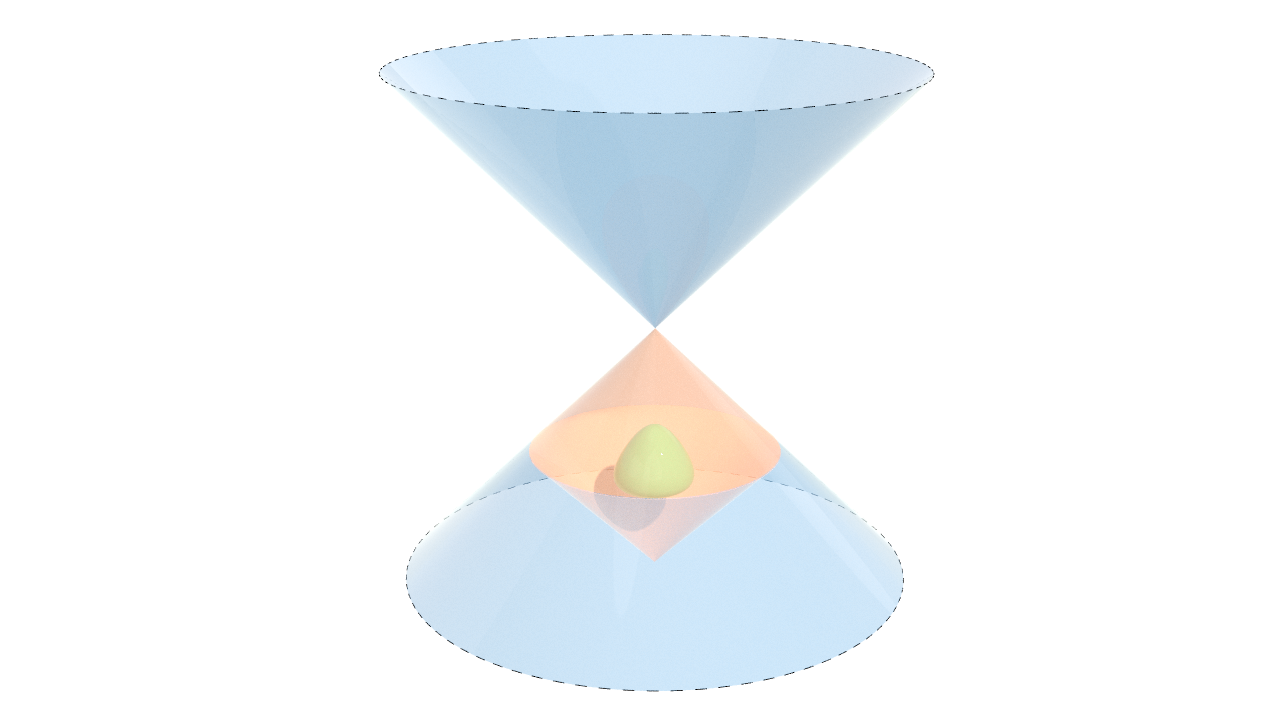}}
    \put(25,90){\(\Omega\)}
    \put(44,98){\line(-3,-1){10}}
    \put(69,65){\(C_{\rm 0}\)}
    \put(190,65){\(C'_{\rm \infty}\)}
    \put(150,50){\(\Omega'\)}
    \end{picture}
    \captionof{figure}{A wave problem on an unbounded Minkowski spacetime domain $\Omega$ contained in a causality cone $C_0$ (left) is equivalent to a wave problem on a bounded Minkowski spacetime domain $\Omega'$ contained in a light cone $C'_\infty$ which is mapped from infinity~(right).}
    \label{fig:IllustrationMainStatement}
    
\end{figure}

The standard numerical approach for solving the infinite domain problem is \emph{domain truncation}~\cite{tsynkov1998numerical}. In domain truncation, one extracts a finite region from the infinite spacetime domain and then numerically solves the scalar wave equation at discrete points on the extracted region using a numerical integration method such as the finite-difference time-domain~(FDTD) method~\cite{yee1966numerical}. Often, artificial NRBCs are imposed along the outer boundary of the extracted region in order to efficiently suppress spurious reflections at the boundary. Notable examples of NRBCs include Sommerfeld radiation boundary conditions~\cite{orlanski1976simple}, absorbing boundary conditions~\cite{engquist1977absorbing}, transparent boundary conditions~\cite{fix1978variational}, and perfectly matched layers~\cite{berenger1994perfectly, chern2019reflectionless}. In addition to the expense of imposing NRBCs, discretizing large multi-dimensional spacetime regions incurs a high computational cost. When the discretization is performed using the FDTD method, for example, the units of storage and floating-point operations both scale as $\mathcal{O}(MN)$, where $M$ is the number of spatial grid points and $N$ is the number of time steps. As $M$ and $N$ grow larger, computational resources will eventually be exhausted. 

One may alternatively solve the infinite domain problem using \emph{time-domain boundary element methods} (TD-BEMs), which only require the discretization of the obstacle boundary instead of the spacetime domain~\cite{seibel2022boundary}. In this approach, one numerically computes the boundary data that satisfy the representation formula for the scalar wave equation. Given a finite number of obstacle boundary elements~$M$ that represent the initial data on a three-dimensional obstacle boundary, one typically requires $\mathcal{O}(M^2N)$ units of storage and $\mathcal{O}(M^2N^2 + M^3)$ operations to simulate wave propagation. With sophisticated compression and approximation algorithms, these costs can be reduced to $\mathcal{O}(M + N)$ and $\mathcal{O}(MN + M^2)$, respectively~\cite{seibel2022boundary}. However, regardless of how TD-BEMs are implemented, the time component of the wave problem must be truncated in order to produce a finite value of~$N$. Therefore, only a partial solution to the infinite domain problem is feasible.

With our approach, we can simulate wave propagation on an untruncated infinite spacetime domain using only finite spatial grid elements and finite time steps. Our method is based on the classical Kelvin transformation discovered by William Thomson (Lord Kelvin) in 1845~\cite{thomson1845extrait}. To simulate physical phenomena on an infinite domain, the Kelvin transformation conformally maps a Laplace problem defined on an infinite domain to an arbitrary bounded domain via a M\"obius inversion. In Section~\ref{sec:mobius_invariance}, we formulate the scalar wave equation as a Laplace problem in Minkowski spacetime and show that the problem is conformally invariant under a Kelvin transformation using the Minkowski metric. By extension, any wave problem on an infinite Minkowski spacetime contained in a causality cone is equivalent to a wave problem on a bounded Minkowski spacetime. In Section \ref{sec:InfDomainProb}, we use this fact to simulate wave propagation in infinite spacetime using a finite discretization of the bounded spacetime.

\section{M\"obius Invariance of the Wave Equation} \label{sec:mobius_invariance}
\subsection{Wave equation as Laplace problem in Minkowski spacetime}
Let $\mathbb R^{n,1} = \{(\vec x, t): \vec x \in \mathbb R^n, t \in \mathbb R\}$ denote the $(n+1)$-dimensional spacetime domain. The wave equation%
\footnote{By re-scaling time $t\gets \nicefrac tc$, we omit the mentioning of propagation speed $c>0$.}%
of is a second-order partial differential equation a scalar field $u$,
\begin{equation}\label{eq:wavePDE}
    \square_{(\vec x,t)} u = \bigg(\sum_{i=1}^{n}\frac{\partial^2 u}{\partial x_i^2}\bigg) -\frac{\partial^2 u}{\partial t^2} = 0.
\end{equation}
The operator $\square_{(\vec x,t)} = \big(\sum_{i=1}^n \frac{\partial^2}{\partial x_i^2}\big) - \frac{\partial^2}{\partial t^2}$ is called the \emph{d'Alembertian}. One may equip $\RR^{n,1}$ with a \emph{Minkowski metric tensor} $\eta$: 
\begin{equation} \label{eq:Minkowski_metric}
\eta((\vec u,a),(\vec v,b)) \coloneqq \langle \vec u,\vec v\rangle_{\RR^n} - ab,
\end{equation}
where $(\vec u, a), (\vec v, b)\in\RR^{n,1}$ are arbitrary spacetime vectors ($\vec u, \vec v \in \RR^n$, $a,b\in\RR$). 
Together, $(\RR^{n,1}, \eta)$ is referred to as the $(n+1)$-dimensional \emph{Minkowski spacetime}. 
Under this metric,
the Laplace operator is the d'Alembertian \(\square_{(\vec x,t)}\) and \eqref{eq:wavePDE} can be interpreted as a Laplace problem.

The analogy between the wave equation and the Laplace problem allows us to explore symmetries in wave equations similar to those in Laplace equations. In particular, the following theory is analogous to the invariance of the Laplace equation under Kelvin transformations in Euclidean space.

\subsection{M\"obius transformations}
A map \(\Phi\colon \Omega\to\Omega'\) between two spacetime open sets $\Omega, \Omega' \subset \mathbb R^{n,1}$ is a \emph{conformal map} if it preserves the metric tensor $\eta$ up to a (possibly non-constant) scalar factor $\varphi: \Omega \rightarrow \mathbb R_{>0}$. In other words, $\Phi$ is conformal if and only if for all $(\vec x, t)\in \Omega$ and $(\vec u, a), (\vec v, b) \in \mathbb R^{n,1}$, 
\begin{equation}\label{eq:pullbackmetric}
    \eta( d\Phi_{(\vec x, t)}(\vec u, a), d\Phi_{(\vec x, t)}(\vec v, b)) = \varphi(\vec x,t)^2 \eta((\vec u, a), (\vec v, b)).
\end{equation}
The left-hand side of \eqref{eq:pullbackmetric} is referred to as the \emph{pullback metric by $\Phi$}, and the scalar $\varphi$ the \emph{conformal factor}. One can verify that the Minkowski inversion, defined by 
\begin{align}\label{eq:Inv}
\Inv\colon
    (\vec x,t)\mapsto \left({\frac{\vec x}{ |\vec x|_{\RR^n}^2 - t^2}}, {\frac{t}{|\vec x|_{\RR^n}^2 - t^2}}\right),
\end{align}
is indeed a conformal map, with conformal factor
\begin{align} \label{eq:Inv_factor}
    \varphi_\Inv(\vec x,t) = \left|{\frac{1}{|\vec x|_{\RR^n}^2 - t^2}}\right|.
\end{align}
For a proof of the conformality of the Minkowski inversion map, see \appref{app:inversionConformalityProof}. 
The inversion as in \eqref{eq:Inv} is defined only for points away from the \emph{light cone} \(C_0=\{(\vec x,t)\in\RR^{n,1}\,\vert\,|\vec x|_{\RR^n}^2-t^2=0\}\). 
One can extend the definition of $\Inv$ for points on the light cone by considering the \emph{one-cone compactification} of the Minkowski spacetime (\appref{app:ExtendedMinkowskiSpacetime}). 

The set of M\"obius transformations on $\mathbb R^{n,1}$, $\Mobius(n,1)$, is the group generated by the composition of finitely many Lorentz transformations (linear isometries in \(\RR^{n,1}\)), translations (\((\vec x,t)\mapsto(\vec x+\vec a, t+b)\)), scalings (\((\vec x,t)\mapsto (s\vec x,st)\)), and Minkowski inversions. 
If $\Phi_1:\Omega \rightarrow \Omega'$ and $\Phi_2:\Omega' \rightarrow \Omega''$ are both conformal, then $\Phi_2\circ\Phi_1: \Omega\rightarrow\Omega''$ is also conformal, and its conformal factor is $\varphi^{\Phi_2\circ\Phi_1} = (\varphi^{\Phi_2}\circ\Phi_1) \varphi^{\Phi_1}$.
Since all of Lorentz transformations, translations, scaling, and Minkowski inversion are conformal, all mappings in $\Mobius(n,1)$ are conformal. 

As established in the Lie sphere geometry~(\cite{Klein:1926:VUH}, \cite[Chapter~2]{cecil2008lie}) and the classification of conformally flat spacetime~\cite{cahen1983domaines}, there are \(\nicefrac{(n+3)(n+2)}{2}\) degrees of freedom of conformal maps on the compactified Minkowski spacetime, all of which are from $\Mobius(n,1)$.

\subsection{M\"obius invariance of waves}
Here, we describe the invariance of the wave problem \eqref{eq:wavePDE} under M\"obius transformations on the domain.
Let $\Omega'\subset\RR^{n,1}$ be a spacetime domain.
Given $\Phi\in\Mobius(n,1)$ with conformal factor $\varphi\colon\Omega'\to\RR_{>0}$, define the \emph{Minkowski--Kelvin transform} 
$\mathcal{K}_{\Phi}\colon (\Omega\to\RR)\xrightarrow{\rm linear} (\Omega'\to\RR)$ for $u:\Omega\rightarrow \RR$:

\begin{equation} \label{eq:minkowskiKelvinTransform}
    \mathcal{K}_{\Phi}u\coloneqq \varphi^{\frac{n-1}{2}}u\circ \Phi,
\end{equation}
where $\Omega = \Phi(\Omega')\subset\RR^{n,1}$ is the image of \(\Omega'\) under \(\Phi\).

To clarify the domain of each function symbolically, the coordinates for \(\Omega'\) are denoted by \((\vec \xi,\tau)\) while the coordinates for \(\Omega\) are \((\vec x,t)\). 
\begin{equation}
\label{eq:map_diagram}
\begin{tikzcd} 
\Omega'\subset\RR^{n,1}_{(\vec \xi, \tau)} 
\arrow[d, "\mathcal{K} u", shift left=0.75em] \arrow[r, "\Phi"] \arrow[d, "\varphi", swap, shift right=0.75em] & 
\Omega\subset\RR^{n,1}_{(\vec x, t)} 
\arrow[d, "u"]\\
\RR_{>0} \, \RR & \RR
\end{tikzcd}
\end{equation}
As stated in the theorem below, the wave equation is \emph{Minkowski--Kelvin invariant} in the sense that \(\square_{(\vec\xi,\tau)}(\mathcal{K}u) = 0\) if and only if \(\square_{(\vec x,t)} u = 0\), 
where \(\square_{(\vec\xi,\tau)} = \big(\sum_{i=1}^n\frac{\partial^2}{\partial\xi_i^2}\big) - \frac{\partial^2}{\partial\tau^2}\) and \(\square_{(\vec x,t)} = \big(\sum_{i=1}^n\frac{\partial^2}{\partial x_i^2}\big) - \frac{\partial^2}{\partial t^2}\) are the standard d'Alembertian in the respective coordinates.

\begin{theorem}
\label{thm:MinkowskiKelvin}
Let $\Phi:\RR^{n,1}_{(\vec \xi, \tau)}\rightarrow \RR^{n,1}_{(\vec x, t)}$ be M\"obius with conformal factor $\varphi(\vec \xi, \tau)$ and let $u(\vec x, t)$ be a function. If $\square_{(\vec x,t)} u(\vec x, t) = 0$, then 

\begin{equation}\label{eq:mk_transform_invariance}
\square_{(\vec \xi,\tau)} (\mathcal K_\Phi u)(\vec\xi,\tau) = \square_{(\vec \xi,\tau)} (\varphi^{\frac{n-1}2} u\circ \Phi)(\vec\xi,\tau) = 0.
\end{equation}
\end{theorem}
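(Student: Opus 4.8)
The plan is to recognize the d'Alembertian $\square$ as the Laplace--Beltrami operator $\Delta_\eta$ of the Minkowski metric and then invoke the conformal covariance of the Yamabe (conformal Laplacian) operator, which in flat signature reduces exactly to the stated weight $\frac{n-1}{2}$. Writing $N=n+1$ for the spacetime dimension, the conformal weight $\frac{n-1}{2}$ is precisely $\frac{N-2}{2}$, the Yamabe weight in dimension $N$; this is the first thing I would point out, since it explains where the exponent in \eqref{eq:minkowskiKelvinTransform} comes from and signals that the theorem is the Lorentzian incarnation of a classical Riemannian identity.

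First I would record that, because $\Phi$ is conformal with factor $\varphi$, the pullback of the flat target metric is $\Phi^*\eta=\varphi^2\,\eta$ on $\Omega'$, which is simply a restatement of \eqref{eq:pullbackmetric}. Thus on $\Omega'$ two metrics are available: the standard flat metric $\eta$, whose Laplace--Beltrami operator is $\square_{(\vec\xi,\tau)}$, and the conformally rescaled metric $\varphi^2\eta$, which, being a diffeomorphic pullback of a flat metric, is again flat. By naturality of the Laplace--Beltrami operator under the isometry $\Phi\colon(\Omega',\Phi^*\eta)\to(\Omega,\eta)$, I obtain $\Delta_{\varphi^2\eta}(u\circ\Phi)=(\square_{(\vec x,t)}u)\circ\Phi$.

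The core step is the conformal change-of-metric law. Writing the Laplace--Beltrami operator in coordinates as $\Delta_g f=|g|^{-1/2}\partial_i(|g|^{1/2}g^{ij}\partial_j f)$ and substituting $g=\eta$, $\tilde g=\varphi^2\eta$ (so that $\tilde g^{ij}=\varphi^{-2}\eta^{ij}$ and $|\tilde g|^{1/2}=\varphi^{N}|\eta|^{1/2}$), a short manipulation yields $\Delta_{\varphi^2\eta}(\psi)=\varphi^{-\frac{N+2}{2}}\,\square_{(\vec\xi,\tau)}\!\big(\varphi^{\frac{N-2}{2}}\psi\big)$. Combining this with the naturality identity and setting $\psi=u\circ\Phi$ gives $\square_{(\vec\xi,\tau)}(\mathcal K_\Phi u)=\varphi^{\frac{n+3}{2}}\,(\square_{(\vec x,t)}u)\circ\Phi$, from which the theorem --- in fact an ``if and only if,'' since $\varphi>0$ --- is immediate.

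The main obstacle is justifying that this Riemannian-looking identity survives the indefinite signature. The resolution is that the coordinate formula for $\Delta_g$ and its conformal transformation are purely algebraic in the metric components and use only $|g|=|\det g|$, so the derivation never sees the signature; the flatness of both $\eta$ and $\Phi^*\eta$ forces the scalar-curvature terms of the general Yamabe identity to vanish, which is exactly why only $\square$, and not a curvature-corrected operator, appears in \eqref{eq:mk_transform_invariance}. Two genuine caveats I would flag: the argument is pointwise on the open sets where $\Phi$ and $\varphi$ are smooth, so the light-cone singularity of the inversion \eqref{eq:Inv} is harmlessly excluded; and the Yamabe weight presumes $N\ge 3$, i.e. $n\ge 2$, with the case $n=1$ degenerating to weight $0$, where the statement is the classical conformal invariance of the $1{+}1$-dimensional wave operator on scalars. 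As a cross-check I would note that the result can instead be proved generator-by-generator over $\Mobius(n,1)$ using the cocycle relation $\varphi^{\Phi_2\circ\Phi_1}=(\varphi^{\Phi_2}\circ\Phi_1)\varphi^{\Phi_1}$: isometries and scalings are trivial, leaving only the inversion \eqref{eq:Inv} as a single explicit, if tedious, computation.
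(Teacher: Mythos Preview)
Your argument is correct and, in fact, cleaner than the paper's. You identify $\square$ with the Laplace--Beltrami operator of $\eta$, pull back via the isometry $\Phi\colon(\Omega',\Phi^*\eta)\to(\Omega,\eta)$ to get $\Delta_{\varphi^2\eta}(u\circ\Phi)=(\square_{(\vec x,t)}u)\circ\Phi$, and then invoke the conformal covariance of the Yamabe operator, using that both $\eta$ and $\Phi^*\eta=\varphi^2\eta$ are flat so that the scalar-curvature terms drop out. This yields the sharper identity $\square_{(\vec\xi,\tau)}(\mathcal K_\Phi u)=\varphi^{\frac{n+3}{2}}(\square_{(\vec x,t)}u)\circ\Phi$, hence an if-and-only-if, and it works uniformly for every $\Phi\in\Mobius(n,1)$ without singling out generators. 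The paper, by contrast, takes exactly the generator-by-generator route you mention as a cross-check: it observes that isometries, translations, and scalings are trivial, then treats the inversion alone via exterior calculus. It writes the equation as $d\star_1 du=0$, pulls back, relates the two Hodge stars by $\tilde\star_1'=G^{-2}\star_1'$ with $G=\varphi_{\Inv}^{-(n-1)/2}$, expands $d\star_1'd(U/G)$ by the product rule, and closes the argument by verifying directly in spherical coordinates that $\square_{(\vec\xi,\tau)}(1/G)=0$. Your approach buys generality and a transparent explanation of the exponent $\tfrac{n-1}{2}=\tfrac{N-2}{2}$; the paper's buys self-containment, since it never cites the Yamabe transformation law and instead reduces everything to one explicit radial computation. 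One small quibble: calling the derivation of $\Delta_{\varphi^2\eta}\psi=\varphi^{-\frac{N+2}{2}}\square(\varphi^{\frac{N-2}{2}}\psi)$ from the coordinate Laplacian a ``short manipulation'' undersells it---it is standard, but not a two-line affair; it is perfectly fine to simply cite the conformal covariance of the conformal Laplacian and note that flatness kills the curvature terms, as you effectively do.
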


\begin{proof}
\appref{app:conformWaveEqnMinkowski}.
\end{proof}

\thmref{thm:MinkowskiKelvin} can also be interpreted as that the wave equation is M\"obius invariant when the function \(u\) is transformed like a \emph{\(\frac{1}{2}\frac{n-1}{n+1}\)-density}, which obeys the scaling law given by \eqref{eq:minkowskiKelvinTransform}.

The significance of \thmref{thm:MinkowskiKelvin} is that it allows one to map two wave problems to each other. These two problems share the same equation to solve (i.e. \eqref{eq:wavePDE}, constituted with the same standard d'Alembertian \(\square\)), but the problems can be defined on two vastly different spacetime domains \(\Omega',\Omega\) related by a M\"obius transformation.  
In particular, a M\"obius transformation on the Minkowski space is capable of mapping an unbounded domain to a bounded one.
Suppose the domain \(\Omega\subset\RR^{n,1}_{(\vec x,t)}\) is contained in any \emph{causality cone}, that is, there exists an apex  \((\vec p_0,a_0)\in\RR^{n,1}\) such that 
\begin{align}
\label{eq:ContainedInCausalityCone}
    \Omega\subset \left\{(\vec x,t)\in\RR^{n,1}_{(\vec x,t)}\,\middle|\,
    |\vec x-\vec p_0|_{\RR^n}^2 < (t-a_0)^2, t>a_0
    \right\}.
\end{align}
Note that \(\Omega\) may still be spatially and temporally unbounded in \(\RR^{n,1}_{(\vec x,t)}\).
Then, by translating the apex to \((\vec 0,-\epsilon)\) for any \(\epsilon>0\) followed by the inversion \eqref{eq:Inv}, the transformed domain
\begin{align}
\label{eq:ContainedInCausalityCone_bounded}
    \Omega' = \Inv(\Omega-(\vec p_0,a_0+\epsilon))\subset\RR^{n,1}_{(\vec \xi,\tau)}
\end{align}
is a bounded set in \(\RR^{n,1}_{(\vec\xi,\tau)}\).

\begin{corollary} \label{cor1}
A wave problem \eqref{eq:wavePDE} on any domain \(\Omega\subset\RR^{n,1}\) contained in any causality cone \eqref{eq:ContainedInCausalityCone} is equivalent to a wave problem \eqref{eq:wavePDE} on some bounded domain \(\Omega'\subset\RR^{n,1}\) (\figref{fig:IllustrationMainStatement}).
\end{corollary}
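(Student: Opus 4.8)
The plan is to produce an explicit M\"obius transformation carrying a bounded domain onto $\Omega$ and then invoke \thmref{thm:MinkowskiKelvin} to transport the wave equation between the two. First I would normalize the geometry by a translation $T$ that moves the apex $(\vec p_0,a_0)$ of the causality cone \eqref{eq:ContainedInCausalityCone} to a point $(\vec 0,\epsilon)$ with $\epsilon>0$, chosen so that the inversion center (the origin) lies strictly in the timelike past of the apex. Writing $(\vec y,s)=T(\vec x,t)$ for the translated coordinates, the image $T(\Omega)$ then lands inside the future cone $\{\,|\vec y|<s-\epsilon,\ s>\epsilon\,\}$, which is strictly contained in the future light cone of the origin. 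Setting $\Omega'\coloneqq\Inv(T(\Omega))$ and $\Phi\coloneqq T^{-1}\circ\Inv$, the map $\Phi\colon\Omega'\to\Omega$ is a composition of an inversion and a translation, hence lies in $\Mobius(n,1)$, and its conformal factor is $\varphi_\Inv$ from \eqref{eq:Inv_factor}.

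The heart of the argument, and the step I expect to be the main obstacle, is showing that $\Omega'$ is bounded; everything else is bookkeeping. This reduces to a uniform estimate on $\Inv$ over the translated cone. With $q\coloneqq|\vec y|^2-s^2$, the ambient Euclidean norm satisfies $|\Inv(\vec y,s)|^2=(|\vec y|^2+s^2)/q^2$, so boundedness is equivalent to bounding $|q|$ below by the Euclidean radius. On the translated cone the containment $|\vec y|<s-\epsilon$ supplies the uniform null-margin $s-|\vec y|\ge\epsilon$, whence
\begin{equation}
|q| = (s-|\vec y|)(s+|\vec y|) \ge \epsilon\,s \ge \tfrac{\epsilon}{\sqrt 2}\,\sqrt{|\vec y|^2+s^2},
\end{equation}
using $|\vec y|<s$ in the last step. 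Therefore $|\Inv(\vec y,s)|\le\sqrt 2/\epsilon$ throughout $T(\Omega)$, so $\Omega'$ lies in a ball of radius $\sqrt 2/\epsilon$. The delicate point is precisely the placement of the inversion center relative to the apex: keeping the origin strictly in the timelike past of the apex is what forces $T(\Omega)$ to stay at a definite Minkowski distance from the singular light cone $C_0=\{\,|\vec y|^2=s^2\,\}$ of $\Inv$; if the apex were instead placed so that its future cone crossed $C_0$, the margin $s-|\vec y|$ would fail to be bounded below and the image would be unbounded.

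With $\Omega'$ bounded, the equivalence of the two wave problems follows from \thmref{thm:MinkowskiKelvin}. The factor $\varphi=\varphi_\Inv$ is smooth and positive on the open set $\Omega'$ (the singularity at the image of infinity lies on $\partial\Omega'$, not in $\Omega'$), so $\mathcal K_\Phi u=\varphi^{\frac{n-1}2}\,u\circ\Phi$ is a linear isomorphism from functions on $\Omega$ to functions on $\Omega'$, with inverse assembled from the M\"obius inverse $\Phi^{-1}=\Inv\circ T$ via the composition law for conformal factors. By the theorem $\mathcal K_\Phi$ sends solutions of $\square_{(\vec x,t)}u=0$ on the unbounded $\Omega$ bijectively to solutions of $\square_{(\vec\xi,\tau)}v=0$ on the bounded $\Omega'$, and the pointwise relation $v=\varphi^{\frac{n-1}2}\,u\circ\Phi$ intertwines prescribed data on matched slices through the explicit weight. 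Hence the infinite-domain wave problem on $\Omega$ is equivalent to a wave problem on the bounded domain $\Omega'$, as claimed (\figref{fig:IllustrationMainStatement}).
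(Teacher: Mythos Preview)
Your argument is correct and follows the paper's own translate-then-invert construction sketched in the paragraph preceding the corollary, with the welcome addition of an explicit estimate showing $\Omega'$ is bounded (which the paper simply asserts). One remark: you place the translated apex at $(\vec 0,+\epsilon)$ so that the inversion center lies strictly in its timelike past, whereas the paper's \eqref{eq:ContainedInCausalityCone_bounded} writes $(\vec 0,-\epsilon)$; your sign is the one that actually keeps the translated cone bounded away from the singular light cone of $\Inv$ and hence yields a bounded image---and it is the convention the paper itself uses in \secref{sec:InfDomainProb} (apex at $(\vec 0,t_0-R)$ with $t_0>R$)---so the discrepancy appears to be a typo in \eqref{eq:ContainedInCausalityCone_bounded} rather than a genuine difference in approach.
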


The next section provides an explicit mapping between the two problems for applications on infinite domains.

\section{The Infinite Domain Problem} \label{sec:InfDomainProb}

In this section, we apply Corollary~\ref{cor1} to the initial boundary value problem (IBVP) for simulating wave propagation on infinite domains. 

\subsection{Unbounded form}
Our infinite domain IBVP takes the following general form.  Let \(\RR^n\setminus B\) be the spatial domain for some bounded set (possibly empty) \(B\subset\RR^{n}\) as obstacles.  
Given compactly supported functions \(f,h\colon \RR^n\setminus B\to\RR\) as initial data, and \(p\colon \partial B\to \RR\) as boundary data,
solve for \(u\colon \RR^n\times[t_0,\infty)\to \RR\) that satisfies
\begin{align}
\label{eq:IBVPOriginal}
    \begin{cases}
    \square_{(\vec x,t)}u(\vec x,t) = 0,&\vec x\in \RR^n\setminus B, t\geq t_0,\\
    (\hat Lu)(\vec x,t) = p(\vec x),& \vec x\in\partial B, t> t_0,\\
    u(\vec x,t_0) = f(\vec x),& \vec x\in \RR^n\setminus B,\\
    \frac{\partial u}{\partial t}(\vec x, t_0) = h(\vec x),& \vec x\in\RR^n\setminus B,
    \end{cases}
\end{align}
where \(\hat L\) is a fixed linear operator describing the boundary condition at the obstacle.

Since the initial data \(f,h\) has spatially compact support \(\{{\rm supp}(f),{\rm supp}(h)\subset \{|\vec x|^2_{\RR^n}<R\}\) for some \(R>0\), by causality argument, the support of the solution \(u\) is restricted to a causality cone: \({\rm supp}(u)\subset \{(\vec x,t)\in\RR^{n,1}\,|\, |\vec x|^2_{\RR^n}<(t+R-t_0)^2, t\geq t_0\}\). 
Without loss of generality, we may assume \(t_0 > R\) by a temporal translation.
In particular, the spacetime domain of interest is contained in a causality cone with apex at the origin. 
Therefore, we can restrict the domain of \eqref{eq:IBVPOriginal} to

\begin{align}
\label{eq:omega_unbounded}
    \Omega = \bigl((\RR^n\setminus B)\times [t_0,\infty)\bigr) \cap \left\{(\vec x, t)\,\middle\vert\,|\vec x|_{\RR^n}^2<t^2\right\},
\end{align}

\begin{figure}[H] 
\centering
\begin{picture}(240,120)
\put(60,0){\includegraphics[width=.8\linewidth]{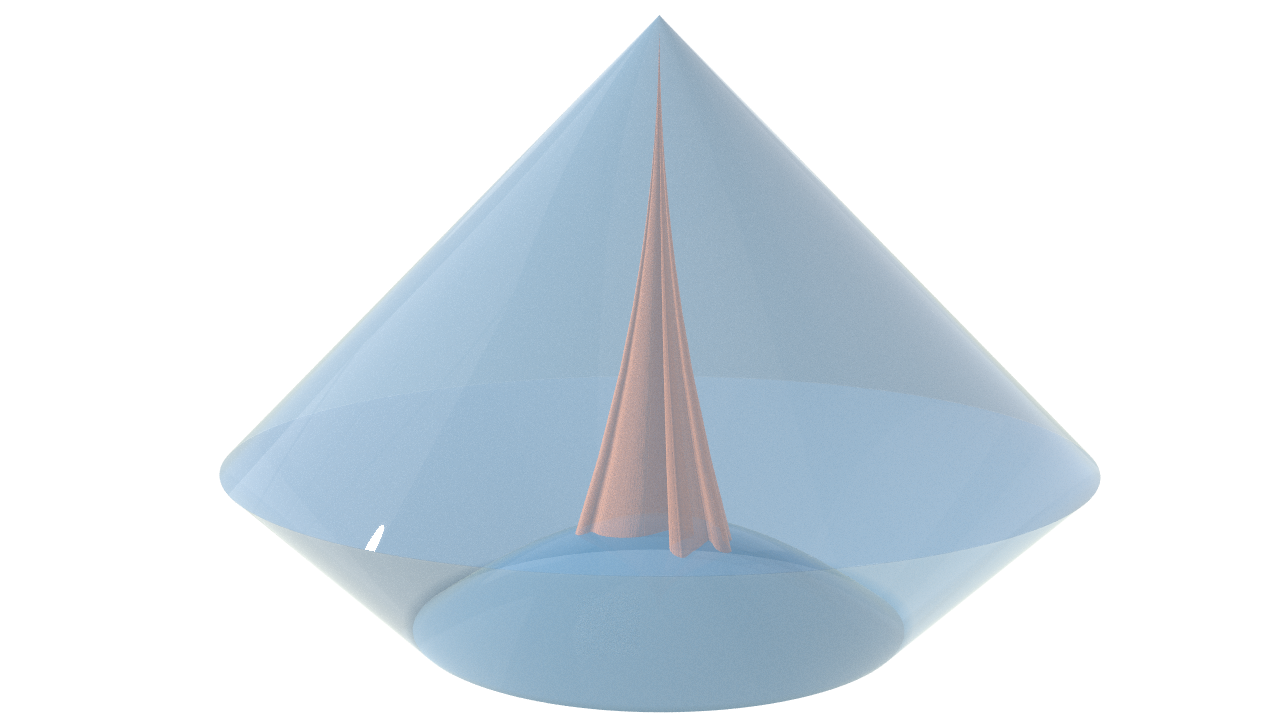}}
\put(-10,50){\includegraphics[width=.55\linewidth]{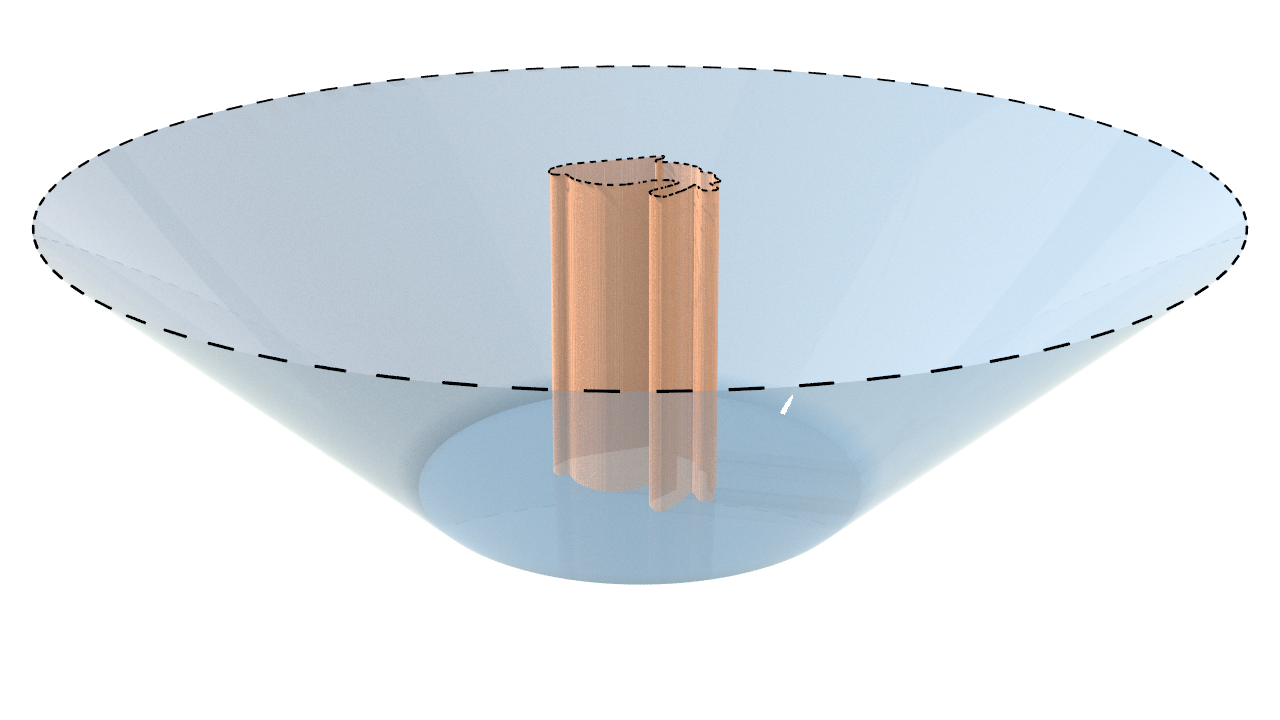}}
\put(0,80){\(\Omega\)}
\put(40,58){\(S_{\rm init}\)}
\put(69,100){\(S_{\rm obs}\)}
\put(91,20){\(\Omega'\)}
\put(118,0){\(S_{\rm init}'\)}
\put(174,45){\(S_{\rm obs}'\)}
\end{picture}
\caption{An unbounded Minkowski spacetime domain $\Omega$ with $n=2$ spatial dimensions contained in a causality cone with obstacle boundary $S_{\rm obs}$ and initial boundary $S_{\rm init}$~(left) is mapped to a bounded Minkowski spacetime domain $\Omega'$ with inverted obstacle boundary $S_{\rm obs}'$ and inverted initial boundary $S_{\rm init}'$~(right). }
\label{fig:boundedDomain}
\end{figure}

on which one poses the IBVP equivalent to \eqref{eq:IBVPOriginal}
\begin{align}
\label{eq:IBVPOnOmega}
    \begin{cases}
    \square_{(\vec x,t)}u(\vec x,t) = 0,&(\vec x,t)\in\Omega,\\
    (\hat Lu)(\vec x,t) = p(\vec x),& (\vec x,t)\in S_{\rm obs},\\
    u(\vec x,t_0) = f(\vec x),&(\vec x,t_0)\in S_{\rm init},\\
    \frac{\partial u}{\partial t}(\vec x, t_0) = h(\vec x),& (\vec x,t_0)\in S_{\rm init},
    \end{cases}
\end{align}
where 
\begin{align}
    S_{\rm obs} \coloneqq  \partial B\times(t_0,\infty)\subset\partial\Omega,\quad
    S_{\rm init} \coloneqq \partial\Omega\cap\{t=t_0\}.
\end{align}
Note that the remaining boundary component \(\partial\Omega\setminus(S_{\rm obs}\cup S_{\rm init}) = \partial \Omega\cap\{|\vec x|_{\RR^n}^2=t^2\}\) is light-like which requires no additional boundary condition for the wave~\(u\).

\subsection{Bounded form}

We apply \thmref{thm:MinkowskiKelvin} to transform \eqref{eq:IBVPOnOmega} to a bounded domain problem.
Let the M\"{o}bius transformation~$\Phi$ in \eqref{eq:mk_transform_invariance} be the Minkowski inversion map $\Inv$ given by \eqref{eq:Inv} with conformal factor $\varphi_{\Inv}$ given by \eqref{eq:Inv_factor}.
Then the inverted domain \(\Omega' \xleftrightarrow{\Inv}\Omega\) is a bounded domain in \(\RR^{n,1}_{(\vec\xi,\tau)}\). 
\figref{fig:boundedDomain} shows $\Omega$ for~$n = 2$ spatial dimensions, an obstacle boundary~$S_{\rm obs}$ of arbitrary shape, and an initial boundary~$S_{\rm init}$, as well as their inverted counterparts $\Omega'$, $S_{\rm obs}'$, and~$S_{\rm init}'$.
Using the following variable substitutions 
\begin{subequations}
\begin{align}
    &U(\vec\xi,\tau) \coloneqq  (u\circ \Inv)(\vec\xi,\tau),\quad
    G(\vec\xi,\tau)\coloneqq \varphi_\Inv^{-\frac{n-1}{2}}(\vec\xi,\tau), \label{eq:U}\\
    &V(\vec\xi,\tau) \coloneqq (\cK_\Inv u)(\vec\xi,\tau) = U(\vec\xi,\tau)/G(\vec\xi,\tau)\label{eq:V},
\end{align}
\end{subequations}
the transformed wave equation becomes
\begin{equation}\label{eq:mk_transform_inversion}
    \square_{(\vec \xi,\tau)} V(\vec\xi,\tau) = 0,\quad(\vec\xi,\tau)\in\Omega'.
\end{equation}
To complete the transformed IBVP, we describe the initial and boundary conditions.
The boundary component \(S_{\rm init}\) for the initial conditions is mapped to a hyperboloid surface 
\begin{align}
    S_{\rm init}'\coloneqq \Inv(S_{\rm init})=\left\{(\vec\xi,\tau)\in \partial\Omega\,\middle\vert\,
    \tfrac{\tau}{|\vec\xi|_{\RR^n}^2 - \tau^2} = t_0
    \right\}.
\end{align}
On \(S'_{\rm init}\) the initial data for \(V\) are given by
\begin{align}
    V(\vec\xi,\tau) = \tfrac{U(\vec\xi,\tau)}{G(\vec\xi,\tau)} = \left(\tfrac{f\circ\Inv}{G}\right)(\vec\xi,\tau),\quad 
    (\vec\xi,\tau)\in S'_{\rm init}
\end{align}
and 
\begin{align}
\nonumber
    \tfrac{\partial V}{\partial\tau}(\vec\xi,\tau) &=
    \left(
    \tfrac{\partial U}{\partial\tau}\tfrac{1}{G}
    -
    \tfrac{U}{G^2}\tfrac{\partial G}{\partial\tau}
    \right)(\vec\xi,\tau)\\
    &=
    \left(
    \tfrac{\partial U}{\partial\tau}\tfrac{1}{G}
    -
    \tfrac{f\circ\Inv}{G^2}\tfrac{\partial G}{\partial\tau}
    \right)(\vec\xi,\tau)
    \quad (\vec\xi,\tau)\in S'_{\rm init},
\end{align}
where the value \(\frac{\partial U}{\partial \tau}\vert_{S'_{\rm init}}\) is given in terms of \(f=u\vert_{S_{\rm init}}\) and \(h = \frac{\partial u}{\partial t}\vert_{S_{\rm init}}\) by
\begin{align}
\nonumber
\textstyle
    \frac{\partial U}{\partial \tau} &=
\textstyle
    \frac{\partial (u\circ\Inv)}{\partial \tau}
    =
    \frac{\partial u}{\partial t}\frac{\partial t}{\partial \tau} + 
    \sum_{i=1}^n \frac{\partial u}{\partial x_i}\frac{\partial x_i}{\partial \tau}\\
    &= 
\textstyle
    h\frac{\partial t}{\partial \tau} + 
    \sum_{i=1}^n \frac{\partial f}{\partial x_i}\frac{\partial x_i}{\partial \tau},\quad (\vec\xi,\tau)\in S'_{\rm init}.
\end{align}
and \(S_{\rm obs}\) for the obstacle is mapped to \(S_{\rm obs}' \coloneqq\Inv(S_{\rm obs})\).
The boundary condition for \(V\) at \(S_{\rm obs}'\) takes the form 
\begin{align}
    (\hat L'V)(\vec\xi,\tau) = q(\vec x,\tau),\quad(\vec x,\tau)\in S_{\rm obs}'
\end{align}
for some linear operator \(\hat L'\) and boundary data \(q\) derived similarly using chain rule.
The remaining boundary components~\(\partial\Omega'\setminus(S_{\rm init}'\cup S_{\rm obs}')\) are light-like as shown in \figref{fig:boundedDomain}, which do not require imposing boundary conditions.

In summary, the transformed IBVP takes the following bounded form: 

\begin{align}
\label{eq:ibvp_bounded}
    \begin{cases}
    \square_{(\vec \xi,\tau)} V(\vec \xi,\tau) = 0, & (\vec \xi, \tau) \in \Omega',
    \\
    (\hat{L}'V)(\vec \xi,\tau) = q(\vec \xi,\tau), & (\vec \xi, \tau) \in S_{\rm obs}', 
    \\
    V(\vec \xi,\tau) = \frac{f\circ \Inv}{G}(\vec \xi,\tau), & (\vec \xi, \tau) \in S_{\rm init}',
    \\
    \frac{\partial V}{\partial \tau}(\vec \xi,\tau) = \left(
    \tfrac{\partial U}{\partial\tau}\tfrac{1}{G}
    -
    \tfrac{f\circ \Inv}{G^2}\tfrac{\partial G}{\partial\tau}
    \right)(\vec \xi,\tau), &(\vec \xi, \tau) \in S_{\rm init}'.
    \end{cases}
\end{align}

\subsection{Numerical Scheme}

We describe the general numerical scheme for solving \eqref{eq:ibvp_bounded} and mapping the solution to the unbounded domain $\Omega$. To solve \eqref{eq:ibvp_bounded}, one first samples~$\Omega'$ at discrete points and then determines $V(\vec \xi, \tau)$ and $\frac{\partial V}{\partial \tau}(\vec \xi, \tau)$ at the points near $S_{\rm init}'$ for predefined values of $f(\vec x)$ and $h(\vec x)$. Then, $V(\vec \xi, \tau)$ is computed using a numerical integration method such as the FDTD method. See \appref{app:cost_mk_transform} for a description of the computational cost of solving \eqref{eq:ibvp_bounded} using the FDTD method in $(n+1)$-dimensional spacetime.

We query the solution \(u(\vec x,t)\) at any point \((\vec x,t)\) on the unbounded domain by finding \((\vec\xi,\tau) = \Inv(\vec x,t)\) and evaluating \(u(\vec x,t) = G(\vec\xi,\tau)V(\vec\xi,\tau)\) where \(G(\vec\xi,\tau)\) is given analytically and \(V(\vec\xi,\tau)\) is interpolated from the solution grid. The general solver for the infinite domain problem is summarized by Algorithm \ref{alg:KelvinWave2}.

\begin{algorithm}[H]
    \caption{\small Solver for infinite domain problem}
    \label{alg:KelvinWave2}
    \small
    \begin{algorithmic}[1]
        \Require Infinite domain problem \eqref{eq:IBVPOnOmega}. Set of query points \(Q=\{(\vec x_i,t_i)\}\subset \Omega\).
        \State \(V(\vec \xi, \tau)\gets\) Solve the bounded domain problem \eqref{eq:ibvp_bounded} with any numerical method.
        \ForEach {\((\vec x_i,t_i)\in Q\)}
        \State \((\vec\xi_i,\tau_i)\gets\Inv(\vec x_i,t_i)\)
        \State \(u(\vec x_i,t_i) \gets G(\vec\xi_i,\tau_i)\cdot \textsc{Evaluate}(V,(\vec\xi_i,\tau_i)) \)
        \EndFor
        \Ensure \(u(\vec x_i, t_i)\) for each query \((\vec x_i,t_i)\in Q\).
    \end{algorithmic}
\end{algorithm}

The accuracy of our numerical solution to $V(\vec \xi, \tau)$ depends on the resolution of the grid that we constructed on~$\Omega'$. For a rectilinear spacetime grid, the error in the solution corresponding to the interior region of $\Omega'$ (excluding the obstacle boundary) is given by~\cite{langtangen2017finite} 

\begin{equation}
    \textnormal{error} = \mathcal{O}(\Delta \xi^2) + \mathcal{O}(\Delta \tau^2).
\end{equation}
where $\Delta \xi$ is the distance between adjacent points along each spatial dimension of the grid, and $\Delta \tau$ is the separation between adjacent temporal grid points. While the grid  on $\Omega'$ is distorted by the M\"{o}bius inversion that maps $\Omega'$ to $\Omega$, the solution to the wave equation is not distorted by the inversion (\figref{fig:kelvin_wave_accuracy}). In other words, if $V(\vec \xi, \tau)$ is a smooth function, then $u(\vec x, t)$ is also a smooth function.
Moreover, a particular wavelength on $\Omega'$ remains the same wavelength on $\Omega$ after the inversion. Therefore, no numerical errors are introduced to our solution by the M\"{o}bius inversion that maps $V(\vec \xi, \tau)$ to $u(\vec x, t)$.

   \begin{figure}[H]
\centering
\begin{picture}(240,90)
\put(0,0){\includegraphics[width=0.97\linewidth]{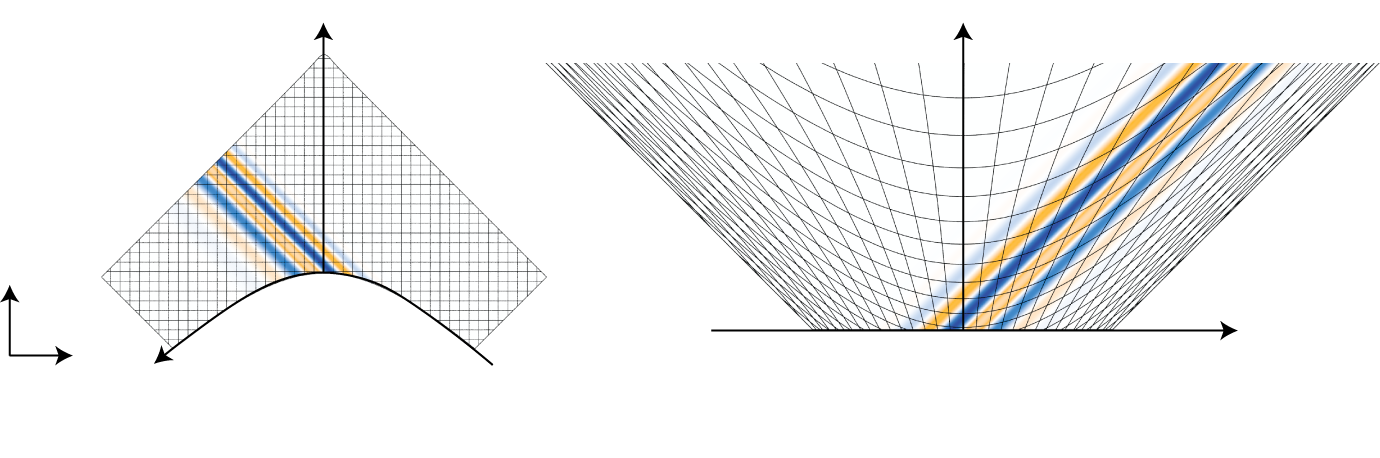}}
\put(220,40){\scriptsize \(\Omega\)}
\put(81,50){\scriptsize \(\Omega'\)}
\put(217,25){\scriptsize \(\vec x\)}
\put(20,18){\scriptsize \(\vec x\)}
\put(170,79){\scriptsize \(t\)}
\put(58,79){\scriptsize \(t\)}
\put(3,30){\scriptsize \(\tau\)}
\put(13,12){\scriptsize \(\vec\xi\)}
\put(208,72){\scriptsize \(u(\vec x,t)\)}
\put(14,55){\scriptsize \(V(\vec\xi,\tau)\)}
\put(96,10){\begin{tikzpicture}
\pgfplotscolorbardrawstandalone[ 
    colormap={myProteinColor}{
        rgb255=(0, 0, 187)
        rgb255=(0, 136, 255)
        rgb255=(255, 255, 255)
        rgb255=(255, 187, 0)
        rgb255=(255, 225, 187)
    },
    point meta min=-1,
    point meta max=1,
    colorbar style={
        height=30pt,
        width=3pt,
        ytick={-1,-0.5,0,0.5,1},
        ytick style = {draw=none},
        yticklabels={{-1},{},{ 0},{},{ 1}},
        yticklabel style={font=\tiny, style={xshift=-3.5ex, anchor=west}}
    }]
\end{tikzpicture}}
\end{picture}
\caption{A smooth function on the bounded Minkowski spacetime domain $\Omega'$ (left) remains smooth after the M\"{o}bius inversion maps the function to the unbounded Minkowski spacetime domain~$\Omega$~(right).}
\label{fig:kelvin_wave_accuracy}
\end{figure}

\subsection{Solution} \label{sec:Solution}

We present our numerical solution to \eqref{eq:IBVPOnOmega} on an unbounded domain~$\Omega$ with~$n = 2$ spatial dimensions and a particular choice of boundary condition on~$S_{\rm obs}$ and initial conditions on~$S_{\rm init}$ (\figref{fig:boundedDomain}). A closed obstacle boundary~$S_{\rm obs}$ of arbitrary shape and perfectly reflective material (i.e. $\hat{L} = 1$, $p = 0$) is inserted near the origin of $\Omega$. On the bounded domain~$\Omega' = \Inv(\Omega)$, the boundary condition becomes

\begin{equation}\label{eq:BC_bounded_domain}
     V(\vec\xi,\tau) = 0,\quad(\vec\xi,\tau)\in S'_{\rm obs}.
\end{equation}

To radiate energy symmetrically in all directions from an initial pulse, we set the initial data on~$S_{\rm init}$ as 

\begin{subequations}  \label{eq:initFns}
\begin{gather} 
f(\vec x) = A\,\, \textnormal{exp}\sum_{i=1}^{2}-\frac{(x_i-\mu_i)^2}{2\sigma^2},\label{eq:GaussianFn}
\\
h(\vec x) = 0 \label{eq:symmetry},
\end{gather}
\end{subequations}
for real values of $A$, $\sigma$, and $\mu_i$. To solve \eqref{eq:ibvp_bounded}, we construct a rectilinear spacetime grid on~$\Omega'$ composed of $M=405^2$ spatial grid points and $N=406$ time steps that is symmetric about the $\tau$-axis and then iteratively compute $V(\vec \xi, \tau)$ over the grid using the FDTD method. Then, we map $V(\vec \xi, \tau)$ to $u(\vec x, t)$ using Algorithm~\ref{alg:KelvinWave2}. \figref{fig:InfSoln} plots four successive time steps of our solution to $u(\vec x, t)$.

\section{Conclusion}
In this paper, we presented a mathematical approach for numerically solving wave propagation problems on unbounded spacetime domains using only finite computation time and memory. Our method assumes that waves propagate in a homogeneous, motionless medium with no viscothermal dissipation, as described by the scalar wave equation~\eqref{eq:wavePDE}. It is an open question whether there exist transformations of other types of wave equations which preserve the equation of motion but not the domain boundedness, such as wave equations that model viscothermal losses and propagation in inhomogeneous and turbulent mediums, vector wave equations, or gravitational wave equations. Regarding the implementation of our method presented in Section~\ref{sec:InfDomainProb}, we assumed that waves propagate in two spatial dimensions and that the obstacle boundary is perfectly reflective. Extending our implementation to three spatial dimensions and incorporating different obstacle boundary impedances could be useful for simulating more realistic physical scenarios of scalar wave propagation.

\section{Acknowledgements}

This work was partially funded by NSF CAREER Award 2239062, the UC San Diego Department of Music, and the UC San Diego Center for Visual Computing. Additional support was provided by SideFX software.

\appendix

\section{Conformality of the Minkowski Inversion Map}\label{app:inversionConformalityProof}

\begin{proof}
 We show that the Minkowski inversion map $\Phi = \Inv$ given by~\eqref{eq:Inv} is conformal with conformal factor $\varphi_{\Inv}$ given by (\ref{eq:Inv_factor}). Consider two arbitrary vectors $\vec u, \vec v$ in a Minkowski spacetime $\mathbb R^{n,1}$ with metric $\eta$ given by~\eqref{eq:Minkowski_metric}. Note that the Jacobian of $\Phi$ is given by

\begin{equation} 
d\Phi = \varphi_\Inv(\mathbb I_{n+1} - 2\varphi_\Inv \vec v \vec v^T \mathbb I_{n,1}).
\end{equation}

\noindent
 Then, we deduce
\begin{align} 
   (\Phi^* \eta) (\vec u, \vec v) & = \eta(d\Phi(\vec u), d\Phi(\vec v)) \nonumber
   \\
   & = \vec u^\intercal d\Phi^\intercal \mathbb I_{n,1} d\Phi \vec v \nonumber 
   \\
   & = \varphi_{\Inv}^2 \eta(\vec u, \vec v) \nonumber,
\end{align}

\noindent
where $\Phi^*$ is the pullback by $\Phi$, $\mathbb I_{n+1} = \operatorname{diag}(1,\ldots, 1)$ is the identity matrix, and $\mathbb I_{n,1} = \operatorname{diag}(1,\ldots,1,-1)$ is the Minkowski metric matrix. 
\end{proof}

\section{Extended Minkowski Spacetime}\label{app:ExtendedMinkowskiSpacetime}
The inversion as is displayed here is defined only for points away from the \emph{light cone} \(C_0=\{(\vec x,t)\in\RR^{n,1}\,\vert\,|\vec x|_{\RR^n}^2-t^2=0\}\).
To define inversion for points on the light cone,
consider the \emph{one-cone extension} of the Minkowski spacetime
\begin{align}
\label{eq:ExtendedMinkowskiSpacetime}
    \overline{\RR^{n,1}}\coloneqq \RR^{n,1}\sqcup C_\infty,
\end{align}
where \(C_\infty\) is a copy of \(C_0\) with an identification map \(j\colon C_0\xrightarrow{\simeq} C_\infty\).  Define the restriction of inversion on \(C_0\subset\RR^{n,1}\) and \(C_\infty\) as swapping \(C_0\) and \(C_\infty\) via the map \(j\).  Define the topology of \(\overline{\RR^{n,1}}\) such that the inversion map is continuous.
Extend Lorentz transformations, translations and scaling to \(\overline{\RR^{n,1}}\) continuously.

Every conformal map \(\Phi\colon\Omega\to\Omega'\) extends to a global conformal map \(\Phi\colon\overline{\RR^{n,1}}\to\overline{\RR^{n,1}}\) whose conformal factor is allowed to vanish or be infinite \(\varphi\colon\overline{\RR^{n,1}}\to [0,\infty]\).

\section{Invariance of the Wave Equation under Minkowski-Kelvin Transforms} \label{app:conformWaveEqnMinkowski}

\begin{proof}
Here we show \thmref{thm:MinkowskiKelvin}.
Since the wave equation is invariant under Minkowski-isometries (Lorentz transformations), translations, and scaling in the domain, we only need to show~\eqref{eq:mk_transform_invariance} for the inversion map $\Phi(\vec \xi, \tau) = \Inv(\vec \xi,\tau)$~\eqref{eq:Inv} with conformal factor $\varphi_{\Inv}$~\eqref{eq:Inv_factor}. We start with the assumption that~\eqref{eq:wavePDE} is valid. In exterior calculus notation, \eqref{eq:wavePDE} and \eqref{eq:mk_transform_invariance} are respectively written as

\begin{subequations}
\begin{align}
d\star_1 du & = 0, \label{eq:wave_eqn_xt}
\\
d\star_1' d (\mathcal K_\Inv u) & = 0.\label{eq:wave_eqn_xitau}
\end{align}
\end{subequations}
Here, \(\star_1\) is the Hodge star of the Minkowski spacetime \(\RR^{n,1}_{(\vec x, t)}\), and \(\star_1'\) is the Hodge star of the corresponding inverted Minkowski spacetime \(\RR^{n,1}_{(\vec\xi,\tau)}\). Applying the pullback map $\Inv^\ast$ to~\eqref{eq:wave_eqn_xt}, we get

\begin{equation}
d\Tilde{\star}_1' dU = 0, \label{eq:dtilde*dU}
\end{equation}
where \(\tilde\star_1' = \Inv^*\star_1\) is a Hodge star that respects the metric of \(\RR^{n,1}_{(\vec x,t)}\) and $U = \Inv^*u = u\circ\Inv$. The Hodge stars \(\Tilde{\star}_1'\) and \(\star_1'\) are related by
\begin{equation}
\label{eq:star_tilde}
\Tilde{\star}_1' = \frac{1}{G^2}\star_1',
\end{equation}
where $G = \varphi_\Inv^{-\frac{n-1}2}$. Inserting~\eqref{eq:star_tilde} into~\eqref{eq:dtilde*dU} and then differentiating, we obtain 

\begin{equation}
\label{eq:d*dU}
d\star_1'd U = -2G dU \wedge \star_1' d\frac{1}{G}.
\end{equation}
We now expand $d\star_1' d (\mathcal K_\Inv u)$ from~\eqref{eq:wave_eqn_xitau} and use~\eqref{eq:d*dU} to simplify the resulting expression as follows
\begin{subequations}
\begin{align}
d\star_1' d (\mathcal K_\Inv u) & = d\star_1' d \frac{U}{G},
\\
& = \big(d\star_1' d \frac{1}{G}\big)U + 2dU \wedge \star_1' d\frac{1}{G} + \frac{1}{G}d\star_1'd U,
\label{eq:d*dKu_second_line}\\
& = \big(d\star_1' d \frac{1}{G}\big)U.
\label{eq:d*dKu_third_line}
\end{align}
\end{subequations}
Finally, expressing the d'Alembertian in $n$-dimensional spherical coordinates with $r = |\vec \xi|_{\RR^n}$, one can verify the following:
\begin{subequations}
\begin{align}
d\star_1' d \frac{1}{G} & = \bigg(\frac{\partial^2}{\partial r^2} + \frac{n-1}{r} \frac{\partial}{\partial r} - \frac{\partial^2}{\partial \tau^2}\bigg) \frac{1}{G} ,
\\
& = 0.
\end{align}
\end{subequations}
Thus, \eqref{eq:mk_transform_invariance} is valid for the inversion map.

\end{proof}

\section{Computational Cost of Bounded Domain Solver}
\label{app:cost_mk_transform}

We describe the computational cost of solving~\eqref{eq:ibvp_bounded} using the FDTD method. Let $\Omega$ be an $(n+1)$-dimensional unbounded Minkowski spacetime domain given by~\eqref{eq:omega_unbounded} such that $\Omega' = \Inv(\Omega)$ is a bounded Minkowski spacetime domain. The number of spatial grid points $M$ and time steps $N$ used to sample the entire bounded domain $\Omega'$ on a rectilinear spacetime grid with spatial sampling interval $\Delta \xi$ and time step $\Delta \tau$ are given by

\begin{subequations}  
\begin{align} 
M & \approx \left\lceil \frac{\xi_0 + \tau_0}{\Delta \xi} \right\rceil^n \label{eq:M},
\\
N & \approx \left\lceil \frac{\tau_0}{\Delta \tau} \right\rceil \label{eq:N},
\end{align}
\end{subequations}
 where $\xi_0$ and $\tau_0$ are defined as
\begin{subequations}  \label{eq:xi_tau_0}
\begin{align} 
\xi_0 &\coloneqq \bigg|\frac{x_0}{x_0^2 - t_0^2}\bigg|,
\\
\tau_0 &\coloneqq \bigg|\frac{t_0}{x_0^2 - t_0^2}\bigg|,
\end{align}
\end{subequations}
and $x_0$ is the radius of the event horizon at the initial time~$t_0$ on~$\Omega$. Note that $x_0$ and $t_0$ are chosen such that $t_0 > x_0$. The units of storage as well as the floating-point operations required to solve~\eqref{eq:ibvp_bounded} both scale as $\mathcal{O}(MN)$.

\end{multicols}

\begin{figure*}[!t]
\centering
\begin{picture}(510,100)
\put(-20,0){\includegraphics[width=0.23\textwidth, trim=190px 250px 450px 0, clip]{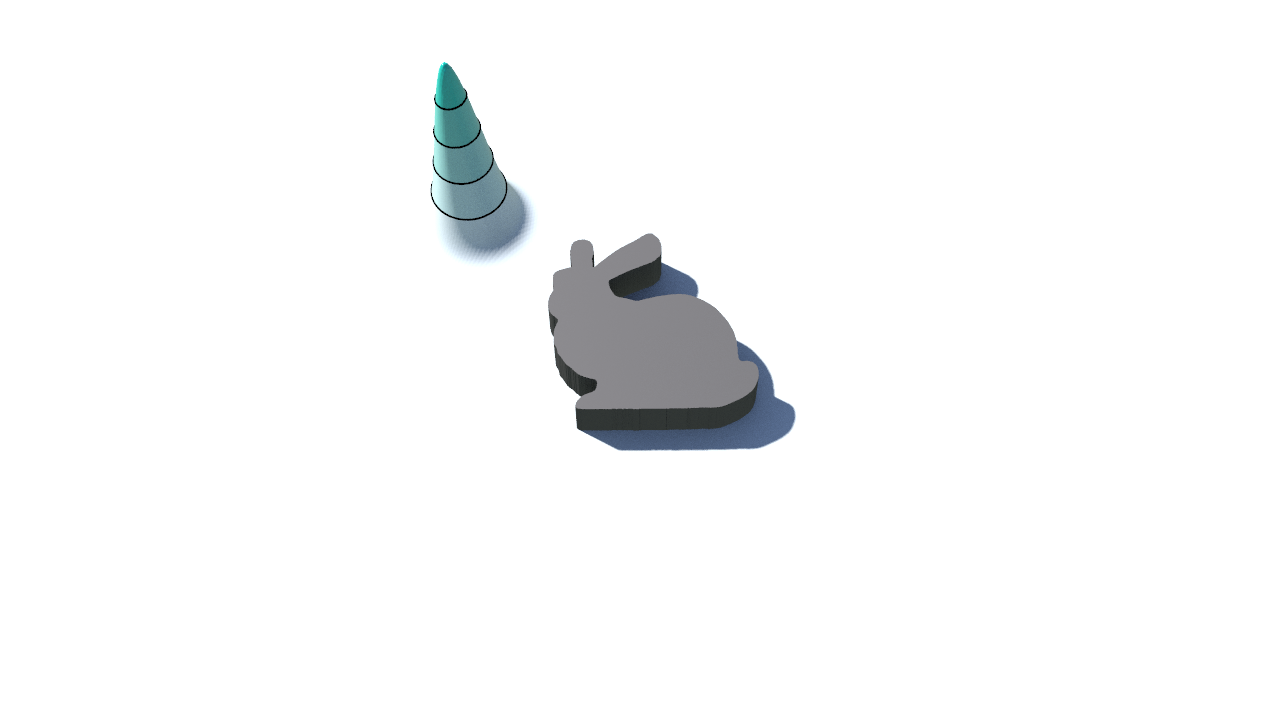}}
\put(90,0){\includegraphics[width=0.23\textwidth, trim=190px 250px 450px 0, clip]{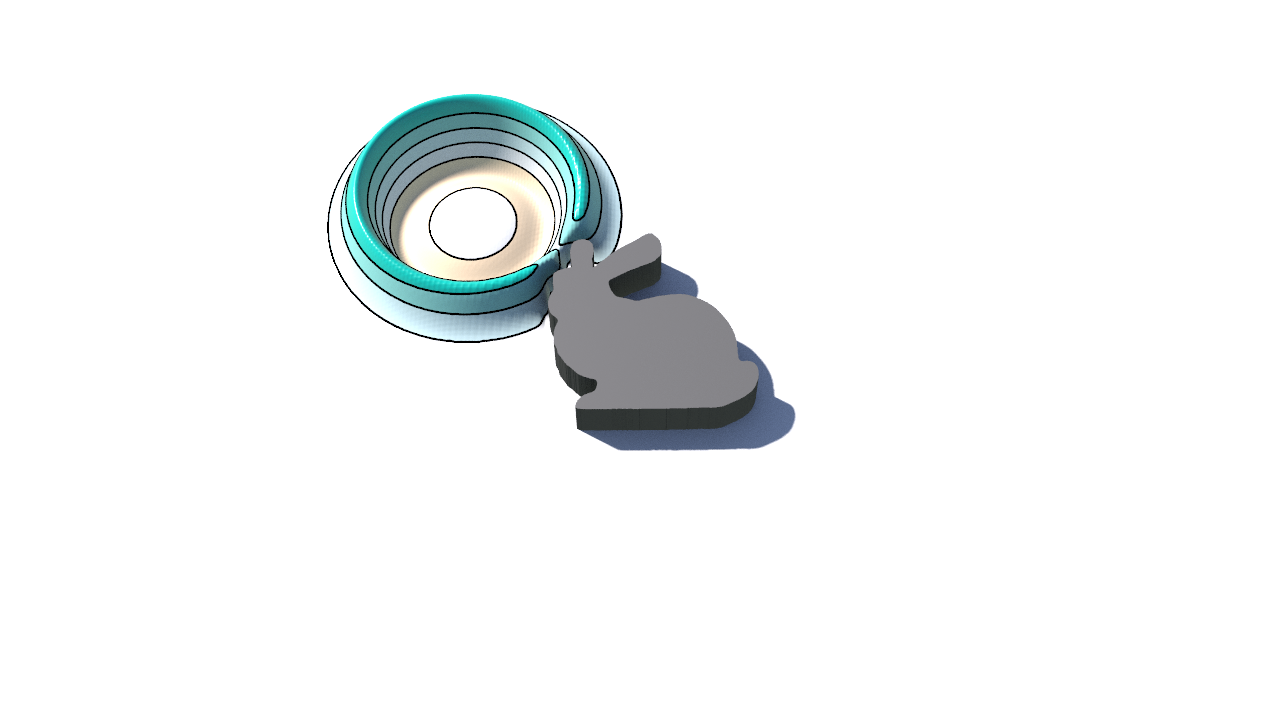}}
\put(210,0){\includegraphics[width=0.23\textwidth, trim=190px 250px 450px 0, clip]{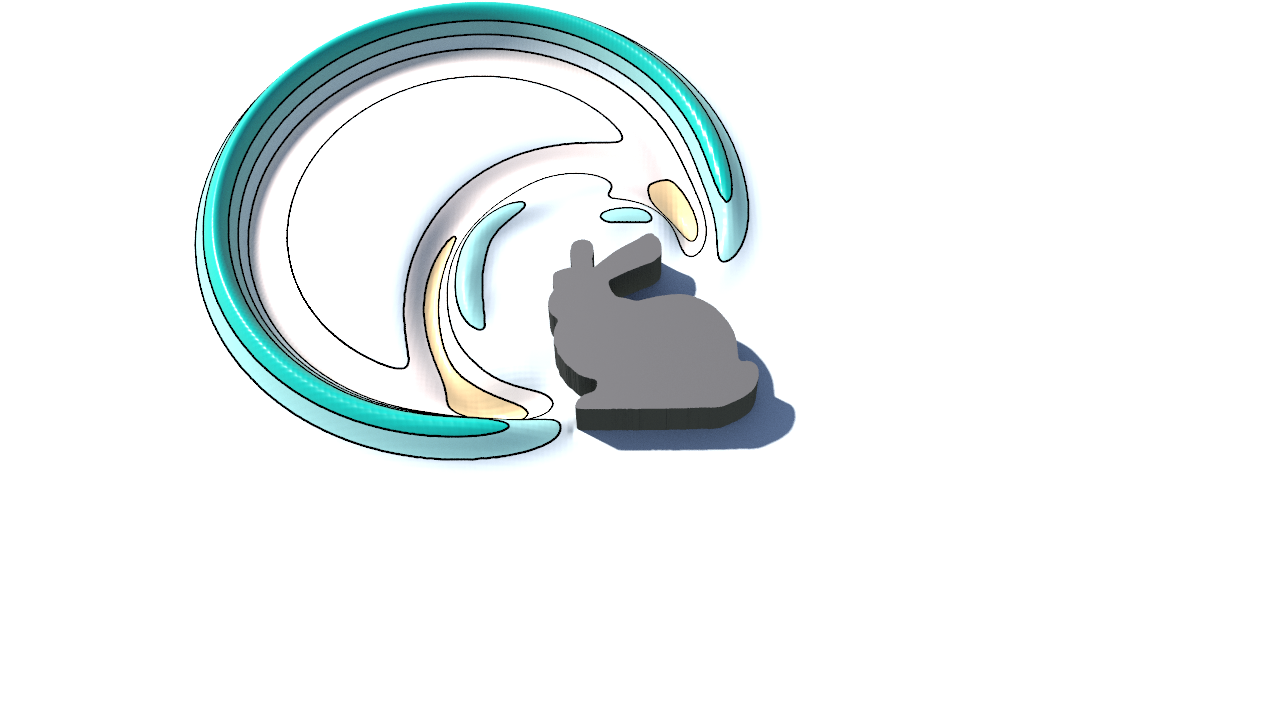}}
\put(320,0){\includegraphics[width=0.36\textwidth]{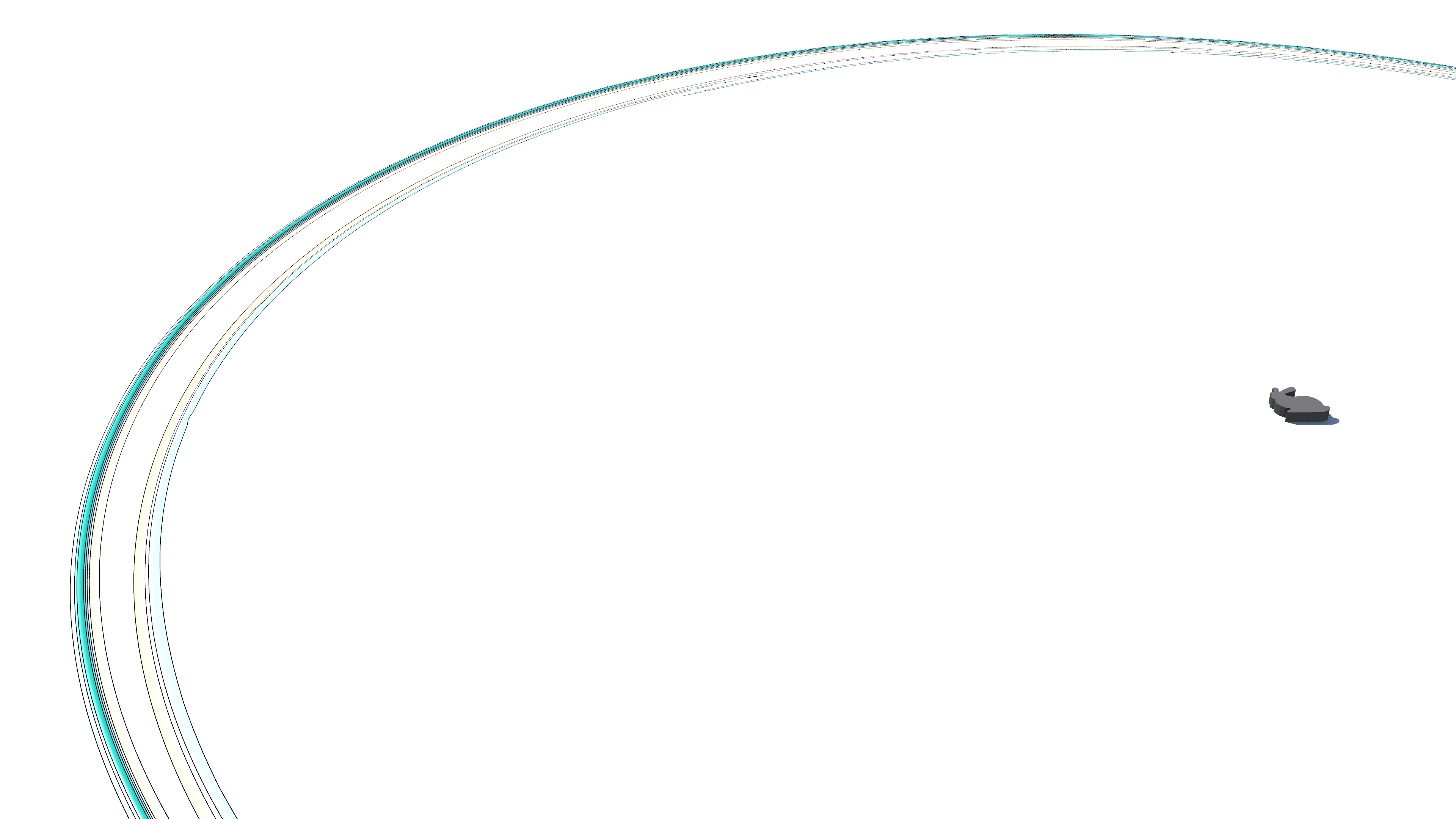}}
\end{picture}

\caption{Numerical solution to the infinite domain problem \eqref{eq:IBVPOnOmega} on an unbounded Minkowski spacetime domain $\Omega$ with $n=2$ spatial dimensions and a perfectly reflective obstacle boundary placed near the $t$-axis. The simulation is excited using a symmetric Gaussian pulse. Four successive time steps of the solution are plotted from left to right.} 

\label{fig:InfSoln}
\end{figure*}



\pagebreak

\bibliographystyle{amsplain}
\bibliography{KelvinWaveBib}

\end{document}